\newtheorem{theorem}{Theorem}
\newtheorem{remark}{Remark}
\newcommand{\pder}[2][]{\frac{\partial#1}{\partial#2}}
\newcommand{\RR}{\mathbb{R}}
\begin{document}

\title{\LARGE \bf Control Contraction Metrics, Robust Control, and Observer Duality}

\IEEEoverridecommandlockouts
\author{
Ian R. Manchester$^1$ \ \ Jean-Jacques E. Slotine$^2$ \\ \ \\
1: ACFR, School of Aerospace, Mechanical and Mechatronic Engineering, University of Sydney, Australia\\
2: Nonlinear Systems Laboratory, Massachusetts Institute of Technology, USA\\
ian.manchester@sydney.edu.au \ jjs@mit.edu}
\maketitle

\begin{abstract}
This paper addresses the problems of stabilization, robust control, and observer design for nonlinear systems. We build upon recently a proposed method based on contraction theory and convex optimization, extending the class of systems to which it is applicable. We prove converse results for mechanical systems and feedback-linearizable systems. Next we consider robust control, and give a simple construction of a controller guaranteeing an $L^2$-gain condition, and discuss connections to nonlinear $H^\infty$ control. Finally, we discuss a ``duality'' result between nonlinear stabilization problems and observer construction, in the process constructing globally stable reduced-order observers for a class of nonlinear systems.
\end{abstract}

\section{Introduction}


Constructive control design for nonlinear systems remains a challenging problem. The classical Lyapunov stability theory leads to necessary and sufficient conditions in terms of existence of control Lyapunov functions \cite{sontag1983lyapunov}, \cite{artstein1983stabilization}, however these may be difficult to find \cite{rantzer2001dual}. Constructive methods, such as feedback linearization \cite{isidori1995nonlinear}, backstepping \cite{krstic1995nonlinear}, energy-based methods  \cite{schaft1999l2}, and sliding or adaptive control \cite{slotine1991applied} are generally applicable only to a limited class of systems.

It is common to pose the search for a feedback control law and an associated performance certificate as an optimization problem. There has been significant work over the last two decades on finding convex representations for such problems, e.g. using linear matrix inequalities and sum-of-squares programming \cite{boyd1994linear}, \cite{parrilo2003semidefinite}. For nonlinear control design, the density functions of \cite{rantzer2001dual}, \cite{prajna2004nonlinear} and related techniques of occupation measures \cite{lasserre2008nonlinear} and control Lyapunov measures \cite{vaidya2010nonlinear} explicitly address convexity of criteria. Another approach is to piece together locally stabilized trajectories, with regions of stability verified via sum-of-squares programming \cite{tedrake2010lqr}.

An alternative to searching for an explicit control law is to embrace real-time optimization in the feedback loop, as in nonlinear model predictive control (NMPC) \cite{diehl2009efficient}. NMPC has many attractive qualities, especially in the handling of constraints. However, for high-speed nonlinear systems or low-cost hardware the computation time may still be prohibitive, and it is difficult to analyse performance and robustness of nonlinear MPC schemes.

In \cite{manchester2014control}, and continued in this paper, we propose a method that could be considered a middle-ground between these two approaches. There is an off-line stage, during which a nonlinear convex optimization problem is solved for a {\em control contraction metric} (CCM). This metric defines a distance between points and trajectories which can be thought of as an infinite family of control Lyapunov functions. There is also an on-line phase, in which a minimal path with respect to this metric is computed, and a differential control law is integrated along this path. 

Compared to an explicit control law, this strategy demands more on-line computation, however the search for a CCM can be made convex in a way that naturally extends the linear theory. Compared to NMPC, it has more limited applicability and does not directly address hard constraints, however the on-line component is computationally simpler than a nonlinear MPC problem, and it is straightforward to ensure robust stability.



The idea of a CCM extends contraction analysis \cite{Lohmiller98} to control synthesis. Contraction analysis is based on the differential dynamics, global stability results are derived from local criteria, and the problem of motion stability is decoupled from the choice of a particular solution. For polynomial systems, the search for a contraction metric can be formulated as a convex optimisation problem using sum-of-squares programming  \cite{aylward2008stability}.

Historically, basic convergence results on contracting systems can be traced back to the results of \cite{lewis1949metric} in terms of Finsler metrics. Weaker forms of contraction allow the study of limit cycles and the dimension of chaotic systems  \cite{manchester2014transverse}, \cite{boichenko}. Contraction is closely related to incremental stability, i.e. the stability of pairs of solutions (see, e.g., \cite{desoer1975feedback} and \cite{angeli2002lyapunov}) and the related concept of convergent dynamics \cite{demidovich1962dissipativity}, \cite{ruffer2013convergent}.

A contraction metric can be thought of as a Riemannian metric with the additional property that differential displacements get smaller (with respect to the metric) under the flow of the system. A control contraction metric has the property that differential displacements can be {\em made} to get shorter by control action. This is analogous to the relationship between a Lyapunov function and a control Lyapunov function.

The conditions derived in \cite{manchester2014control} for stabilizability take the form of state-dependent linear matrix inequalities. The conditions are superficially similar to those for state-feedback synthesis via global linearization \cite[Ch. 7]{boyd1994linear}, and the closely related results for output regulation using convergence theory \cite{pavlov2006uniform}. However, the control construction we propose allows the use of non-quadratic (state-dependent) metrics without requiring difficult integrability conditions on gains. There are also similarities to LPV control synthesis and gain scheduling (e.g. \cite{apkarian1995convex}) however the class of systems considered is different.

In this paper, we build upon the results of \cite{manchester2014control} in a number of ways. Firstly, in Section \ref{sec:CCM} we extend the class of systems to which the method of \cite{manchester2014control} is applicable. In Section \ref{sec:converse} we give two converse results. In Section \ref{sec:robust} we extend the CCM method to a robust control problem similar to nonlinear $H^\infty$. Finally, in Section \ref{sec:observer} we discuss ``dual'' conditions for observers, and provide a novel procedure for reduced-order observer design.



\section{Preliminaries}

We consider a nonlinear time-dependent control-affine system
\begin{equation}\label{eq:sys}
\dot x = f(x,t) +B(x,t)u
\end{equation}
where $x(t)\in\RR^n, u(t)\in\RR^m$ are state and control, respectively, at time $t\in\RR^+:=[0,\infty)$. The functions $f: \RR^n\times \RR^+ \rightarrow \RR^n$ and $B:\RR^n\times \RR^+\rightarrow \RR^{n\times m}$ are assumed smooth. Note that this is a wider class of systems than considered in \cite{manchester2014control}, where it was assumed that $B$ was independent of $x$.

Contraction analysis is based on the study of an {\em extended system} consisting of \eqref{eq:sys} and its associated system of differential (a.k.a. variatonal, linearized) dynamics:
\begin{equation}\label{eq:diffdyn}
\dot\delta_x(t) = A(x,u,t)\delta_x(t)+B(x,t)\delta_u(t)
\end{equation}
where $A(x,u,t) = \pder{x}(f(x,t)+B(x,t)u)$ is affine in $u$.

As is standard, a solution $(x^\star, u^\star)$ defined on $[0, \infty)$ is said to be globally asymptotically stabilized by a feedback controller $u=k(x,t)$ if a closed-loop solution $x(t)$ exists on $t\in [0, \infty)$ and satisfies
\begin{enumerate}
\item For any $\alpha$ there exists an $\epsilon$ such that $|x_0-x^\star(0)|<\epsilon$ implies $|x(t)-x^\star(t)|<\alpha$,
\item For any initial condition $x_0\in \RR^n$, the closed loop solution satisfies $|x(t)-x^\star(t)|\rightarrow 0$.
\end{enumerate}
Global exponential stabilization refers to the stronger condition that there exists a $K$ and $\lambda$ such that \[|x(t)-x^\star(t)|\le K e^{-\lambda t}|x_0-x^\star(0)|\] for all $x(0)$.

Following \cite{manchester2014control}, a system of the form \eqref{eq:sys} is said to be {\em universally stabilizable by state feedback} if for every solution $(x^\star, u^\star)$ defined on $t\in [0, \infty)$ there exists a state feedback controller $k:\RR^n\rightarrow \RR^m$ such that $(x^\star, u^\star)$ is globally stabilized by $u=k(x,t)$. Analogously, a system can be {\em universally exponentially stabilizable with rate $\lambda$}.

Note that universal stabilizability is a stronger condition than global stabilizability of a particular solution (e.g. the origin).

%
%
%

\subsection{Riemannian Metrics}
Here we briefly recall some relevant facts from Riemannian geometry \cite{docarmo1992riemannian}.
Riemannian geometry provides a way of extending intuitive notions of Euclidean geometry to the study of distances and curvature on more general nonlinear manifolds. In this paper, the underlying state space remains $\RR^n$, but we use Riemannian metrics as generalised distances between points for the purpose of motion stabilization.

A Riemannian metric is a symmetric positive-definite matrix function $M(x)$, smooth in $x$, which defines a ``local Euclidean'' structure on a manifold, by way of the inner product $\langle\delta_1, \delta_2\rangle_x = \delta_1'M(x)\delta_2$ for any two tangent vectors $\delta_1, \delta_2$, and the norm $\sqrt{\langle\delta, \delta\rangle_x}$.

Let $\Gamma(a,b)$ be the set of smooth paths between two points $a$ and $b$, where each $\gamma\in\ \Gamma(a,b)$ is a smooth mapping $\gamma:[0, 1] \rightarrow \RR^n$ and satisfying $\gamma(0) = a$ and $\gamma(1) = b$. We use the notation $\gamma(s), s\in[0, 1]$ and $\gamma_s(s) := \pder[\gamma]{s}$. Given a metric $M(x)$, we can define the path {\em length}:
\[
l(\gamma) := \int_0^1 \sqrt{\langle \gamma_s, \gamma_s\rangle_s} ds 
\]
and {\em energy}, i.e. integral of squared-length:
\[
e(\gamma) := \int_0^1 \langle \gamma_s, \gamma_s\rangle_s ds.
\]
The Riemannian distance $d(a,b)$ between two points is the length of the shortest path $\gamma$ between them, and satisfies the properties of the metric. In our context the Hopf-Rinow theorem implies the existence of a minimal path, which is smooth and a geodesic. We also use the notation $e(a,b)=e(\gamma)$ for the energy of the minimal curve. Given the minimal curve $\gamma$ and an arbitrary curve $c \in\Gamma(a,b)$, we have:
\[
e(\gamma) = l(\gamma)^2\le l(c)^2 \le e(c).
\]
This relation is useful because it implies that paths of minimal energy and distance are the same, and in many respects the energy function is more convenient due to its smoothness. We note that the distance function is invariant to reparameterization of $\gamma$, while the energy function is not.


\section{Control Contraction Metrics}\label{sec:CCM}

We now give the basic idea of a control contraction metric (CCM), proposed by the authors in \cite{manchester2014control}. Suppose a system has the property that every solution is locally stabilizable. Each local stabilization may have small region of stability, but if a ``chain'' of states joining the current state $x$ to $x^\star(t)$ is stabilized, in the sense that if each ``link'' in the chain gets shorter, then $x(t)$ is driven towards $x^\star(t)$.

Construction of a CCM is based on taking this concept to the limit as the number of links in the chain goes to infinity, and becomes a smooth path $\gamma(s)$ connecting $x^\star(t)$ and $x(t)$ in the state space. The differential dynamics \eqref{eq:diffdyn} describe the dynamics of infinitesimal path segments. Now, suppose one can find a Riemannian metric $\langle \delta_x, \delta_x\rangle_x=\delta_x'M(x,t)\delta_x$ which verifies that a differential feedback law $\delta_u = K(x,u,t)\delta_x$, affine in $u$, is stabilising, i.e.
\begin{equation}\label{eq:ccm1}
\dot M+(A+BK)'M + M(A+BK)<0, \quad \forall x,u,t
\end{equation}
then we refer to $M(x,t)$ as a control contraction metric. The $<0$ above can be replaced with $\le -2\lambda M$ for exponential stability with rate $\lambda$. The control signal applied is then computed by integrating the differential control signals $\delta_u$ along the path $\gamma$, i.e. solving
\begin{equation}\label{eq:control_int}
u(t,s) = u^\star(t)+\int_0^s K(\gamma(t,s),u(t,s),t) \gamma_s(t,s)ds,
\end{equation}
over the interval $s\in[0,1]$ and applying the control signal $u(t) = u(t,1)$.
Note that, since $K$ is affine in $u$, the above integral equation is guaranteed to have a unique solution. If $K$ is independent of $u$ then the integral equation reduces to quadrature.

\begin{theorem} Suppose a system \eqref{eq:sys}, \eqref{eq:diffdyn} satisfies \eqref{eq:ccm1} for some $M(x,t)$ satisfying uniform bounds $\alpha_1I\le M(x,t)\le \alpha_2 I$ with $\alpha_1>0$, then the system is universally stabilizable by state feedback, in particular the control law \eqref{eq:control_int}. 
When \eqref{eq:ccm1} has right-hand-side $-2\lambda M$ the system is universally exponentially stabilizable.
\end{theorem}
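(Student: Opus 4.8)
The plan is to use the Riemannian energy between the target and actual trajectories as a Lyapunov-like function and to show it contracts at rate $2\lambda$ under the proposed feedback. Write $E(t) := e(x^\star(t), x(t))$ for the energy of the minimizing geodesic $\gamma(t,\cdot)$ (its existence guaranteed by Hopf--Rinow), with $\gamma(t,0)=x^\star(t)$ and $\gamma(t,1)=x(t)$. The first step is to verify the control law \eqref{eq:control_int} is well posed: since $K$ is affine in $u$, the integral equation defining $s\mapsto u(t,s)$ is a linear integral equation with a unique absolutely continuous solution, and $u(t,1)$ depends only on $(x(t),t)$ through the geodesic, so it is a genuine static state feedback.

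The heart of the argument is the pointwise contraction of the differential Lyapunov function $V_s := \gamma_s' M(\gamma,t)\gamma_s$. I would introduce an auxiliary family of curves $\gamma(t,s)$ transported by the closed-loop flow, $\pder{t}\gamma(t,s)=f(\gamma,t)+B(\gamma,t)u(t,s)$, with $u(t,s)$ given by \eqref{eq:control_int}; the boundary conditions at $s=0,1$ reduce to the dynamics of $x^\star$ and $x$, so this family connects the two trajectories for all $t$. Differentiating \eqref{eq:control_int} in $s$ gives $\pder{s}u = K\gamma_s$, and commuting the mixed partials yields $\pder{t}\gamma_s = \pder{s}\left(f(\gamma,t)+B(\gamma,t)u(t,s)\right) = (A+BK)\gamma_s$, i.e. the tangent vector obeys exactly the closed-loop differential dynamics. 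Hence $\pder{t}V_s = \gamma_s'(\dot M+(A+BK)'M+M(A+BK))\gamma_s \le -2\lambda V_s$ by \eqref{eq:ccm1} (with right-hand side $-2\lambda M$), and integrating over $s\in[0,1]$ shows the energy $G(t)$ of the transported curve satisfies $\dot G\le -2\lambda G$.

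To pass from the transported curve to the minimizing geodesic actually used by the controller, I would run a comparison/envelope argument: initializing the transport at a fixed time $t_0$ from the geodesic, we have $G(t_0)=E(t_0)$ while $G(t)\ge E(t)$ for all $t$, so the upper Dini derivative satisfies $\dot E(t_0)\le \dot G(t_0)\le -2\lambda E(t_0)$; since $t_0$ is arbitrary, $E(t)\le e^{-2\lambda t}E(0)$. Finally the uniform bounds close the loop: from $\alpha_1 I\le M\le \alpha_2 I$ one gets $\alpha_1|x-x^\star|^2\le E = d(x^\star,x)^2\le \alpha_2|x-x^\star|^2$ (the upper bound via the straight-line path), whence $|x(t)-x^\star(t)|\le\sqrt{\alpha_2/\alpha_1}\,e^{-\lambda t}|x_0-x^\star(0)|$, i.e. universal exponential stabilization with $K=\sqrt{\alpha_2/\alpha_1}$. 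For the plain case \eqref{eq:ccm1} (strict $<0$, $\lambda=0$), the same computation gives $\dot E<0$ whenever $E>0$, so $E$ is nonincreasing, yielding Lyapunov stability from the bounds; attractivity then follows by a LaSalle-type argument provided the strict decrease is suitably uniform on sublevel sets.

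I expect the main obstacle to be the regularity underpinning the energy-derivative computation: justifying that the minimizing geodesic depends smoothly enough on $t$ for $E(t)$ to be (Dini-)differentiable, commuting $\pder{s}$ and $\pder{t}$, and guaranteeing existence of the transported family on the relevant interval. The comparison argument above is designed to sidestep differentiating the (possibly nonsmooth) geodesic directly, reducing everything to the transported curve, whose regularity follows from smoothness of $f$, $B$, $M$, and $K$ together with well-posedness of the linear integral equation for $u(t,s)$.
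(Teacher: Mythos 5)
The paper itself omits the proof of this theorem, deferring to the cited reference, and your argument reconstructs essentially that standard CCM proof: well-posedness of the path-integral control via the $u$-affine Volterra equation, differential contraction of $\gamma_s'M\gamma_s$ along a transported curve, a comparison with the minimal-geodesic energy via an upper Dini derivative, and the uniform bounds $\alpha_1 I\le M\le\alpha_2 I$ to convert to Euclidean estimates. The substance is correct; the only imprecision is that the transported curve's $s=1$ endpoint coincides with the actual closed-loop state only to first order at $t_0$ (for $t>t_0$ the applied control is recomputed along the new geodesic), but this $o(t-t_0)$ discrepancy does not affect the Dini-derivative bound, and you rightly flag that the non-exponential case needs an extra uniformity argument to upgrade strict decrease of the energy to attractivity.
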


We omit the proof as it is similar to \cite{manchester2014control}.

Note that the controller \eqref{eq:control_int} does {\em not} necessarily make the resulting closed-loop system contracting in every direction, only along the minimal path $\gamma$. This is in contrast to the methods of \cite{pavlov2006uniform} and avoids the problems of integrability discussed in \cite{lu1997robustness} and \cite{fromion2003theoretical}.

It is well-known that Condition \eqref{eq:ccm1} is equivalent to the existence of $W(x,t)>0$, $Y(x,u,t)$, with the latter affine in $u$, satisfying:
\begin{equation}\label{eq:CCM_Yform}
-\dot W + AW +WA' + BY + Y'B'<0
\end{equation}
by taking $W=M^{-1}$ and $K = YW^{-1}$. Another useful condition is the existence of $W(x,t)>0$ and a scalar function $\rho(x,u,t)$, affine in $u$,
\begin{equation}\label{eq:CCM_rhoform}
-\dot W + AW +WA' -\rho BB'<0.
\end{equation}
which implies \eqref{eq:CCM_Yform} by taking $Y = -\frac{1}{2}\rho B'$. Each of \eqref{eq:CCM_Yform}, \eqref{eq:CCM_rhoform} clearly implies
\begin{equation}\label{eq:CCM_kernelform}
\delta(-\dot W + AW +WA')\delta <0, \quad \forall \delta:B'\delta = 0.
\end{equation}
In each case, the $<0$ can be replaced by $\le-2\lambda W$ for exponential stability with rate $\lambda$.

It is always the case that \eqref{eq:CCM_rhoform} $\Rightarrow$ \eqref{eq:CCM_Yform} $\Rightarrow$ \eqref{eq:CCM_kernelform}. In the case when $\rho(x,u,t)$ is an unrestricted function of $(x,u,t)$, Finsler's theorem states that  \eqref{eq:CCM_kernelform}$\Rightarrow$\eqref{eq:CCM_rhoform}, so all three are equivalent. However, we choose to restrict to affine dependence on $u$ to guarantee solvability of \eqref{eq:control_int}.

\begin{remark} Each of the above conditions is ``intrinsic" in the sense that it is invariant under a smooth change of coordinates, e.g. taking \eqref{eq:CCM_Yform} and the transformation $\delta_x\mapsto T(x)\delta_x$, we have the corresponding transformations $A\mapsto \dot TT^{-1}+TAT^{-1}$, $B\mapsto TB$, $W\mapsto TWT'$ and $Y\mapsto YT'$. Straightforward calculations show that the inequality \eqref{eq:CCM_Yform} is preserved, and the resulting geodesics and feedback control are identical.
This fact may be useful in extending our results to manifolds beyond $\RR^n$ by making use of an atlas of local charts \cite{docarmo1992riemannian}.
\end{remark}

\subsection{CLFs and Alternative Control Laws}

It is not essential that the path-integral controller \eqref{eq:control_int} used in the proof is actually implemented on-line. In fact, given a control contraction metric, the distance or energy function between two $x$ and $x^\star$ can be used as a control Lyapunov function for {\em any} feasible target trajectory $x^\star(t)$.

From the formula for first variation of energy \cite{docarmo1992riemannian}, we have the following for the derivative of the energy functional:
\begin{align}
\frac{1}{2}\frac{d}{dt} e(x^\star(t), x(t)) =&\langle \gamma_s(0,t), \dot x^\star(t)\rangle_{x^\star} - \langle \gamma_s(1,t), f(x,t)\rangle_x\notag\\
&-\langle \gamma_s(1,t),B(x,t)u\rangle_x\label{eq:control_cone}
\end{align}
We note that the energy is a smooth function of $x$ and $x^\star$ as long as there is a unique minimal geodesic joining them. If not, the above formula still holds with $\frac{d}{dt}$ interpreted as the right-derivative, and the infimum of the right-hand side is taken over minimal geodesics joining $x$ and $x^\star$.

The control contraction metric condition then implies the CLF-like condition that if $\gamma_s(1,t)'M(x,)B(x,t)u>0$ then $\langle \gamma_s(0,t), \dot x^\star\rangle_{x^\star} - \langle \gamma_s(1,t), f(x,t)\rangle_x<0$.

One can then treat the energy like a CLF and choose any $u$ for which \eqref{eq:control_cone} is negative. This makes precise the intuitive notion that the controller should push the state towards $x^\star$, where ``towards'' is defined by the geodesic and the metric. This would allow, e.g., the use of linear programming to find a stabilizing control signal optimizes some other criterion or bounds.

The formula \eqref{eq:control_cone} also makes clear that any resulting controller has an infinite up-side gain margin, and also gives an indication of how accurately one must compute the geodesic $\gamma$. A large control input can be stabilizing as long as the direction of the vector $B(x,t)'M(x,t)\gamma_s(1,t)$ is known in the $u$ space to within 90 degrees. This may prove useful in reducing the computational complexity of the on-line component of the CCM control strategy.

\section{Converse Results} \label{sec:converse}

The CCM conditions we give are sufficient and not, in general, expected to be necessary for stabilization. Therefore it is interesting to note classes of systems where in fact they are necessary. He we give two such examples: feedback linearizable systems and mechanical systems.

\subsection{Feedback Linearizable Systems}

We say that a time-invariant system of the form
\[
\dot x = f(x,t)+B(x,t)u
\]
is feedback linearizable if there exists a time-varying smooth global diffeomorphism $z = \theta(x,t)$ and a smooth feedback control $\bar u(x,v,t) = \alpha(x,t) + \beta(x,t)v$, with $\beta(x,t)$ nonsingular for all $x,t$, such that transformed system is LTI:
\begin{equation}\label{eq:fl_zsys}
\dot z = Gz+Hv
\end{equation}
where the pair of constant matrices $(G, H)$ is stabilizable. 

When such functions $\theta, \alpha, \beta$ can be found, the feedback stabilization problem is rendered trivial, but a major challenge is that even if such functions can be proven to exist, they may be difficult to find: usually this involves solving a partial differential equation (see, e.g.,  \cite{isidori1995nonlinear}). 

Note that in much of the literature on feedback linearization, it is required that $(G,H)$ be controllable, but our definition obviously covers this case.

\ 

\begin{theorem} For any feedback linearizable system there is a control contraction metric that verifies universal stabilizability, given by $M(x) = \pder[\theta]{x}'X\pder[\theta]{x}$ where $P$ is any positive definite matrix $X$ satisfying, for some gain $L$,
\begin{equation}\label{eq:fblin_lyap}
XG+G'X+XHL+L'H'X<0.
\end{equation}
\end{theorem}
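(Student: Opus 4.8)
The plan is to recognise $M(x,t)=\Theta'X\Theta$, with $\Theta(x,t):=\pder[\theta]{x}$, as the pullback through the diffeomorphism $z=\theta(x,t)$ of a constant Lyapunov metric $X$ for the LTI system \eqref{eq:fl_zsys}, and then to show that the CCM inequality \eqref{eq:ccm1} in the original coordinates is inherited from the linear Lyapunov inequality \eqref{eq:fblin_lyap}. First I would note that since $(G,H)$ is stabilizable there is a gain $L$ with $G+HL$ Hurwitz, hence a positive definite $X$ solving \eqref{eq:fblin_lyap}; rewriting its left-hand side as $X(G+HL)+(G+HL)'X$ shows that $X$ certifies contraction of the linear variational dynamics $\dot\delta_z=G\delta_z+H\delta_v$ under the differential feedback $\delta_v=L\delta_z$.

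Next I would transfer this certificate to the $x$-coordinates. Since $\theta$ is a diffeomorphism, $\Theta$ is nonsingular and $\delta_z=\Theta\delta_x$, so that $\delta_x'M\delta_x=\delta_z'X\delta_z$ identically. Because $\dot z=Gz+Hv$ holds for all $(x,u,t)$ once $u=\alpha(x,t)+\beta(x,t)v$ is substituted, differentiating this identity yields the exact variational relation $\dot\delta_z=G\delta_z+H\delta_v$. Inverting the feedback map gives $v=\beta^{-1}(u-\alpha)$, whose differential $\delta_v=\Phi\delta_x+\beta^{-1}\delta_u$, with $\Phi:=\pder{x}\big(\beta^{-1}(u-\alpha)\big)$, is affine in $u$ because $\alpha,\beta$ depend on $(x,t)$ only.

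The key step is then to select an admissible differential feedback. Imposing $\delta_v=L\delta_z=L\Theta\delta_x$ and solving the relation above for $\delta_u$ gives $\delta_u=K(x,u,t)\delta_x$ with $K=\beta\big(L\Theta-\Phi\big)$, which is affine in $u$ and hence an admissible differential control law for \eqref{eq:diffdyn} (so that \eqref{eq:control_int} remains well posed). Under this feedback $\delta_z=\Theta\delta_x$ evolves as $\dot\delta_z=(G+HL)\delta_z$, whence
\[
\frac{d}{dt}\big(\delta_x'M\delta_x\big)=\delta_z'\big[(G+HL)'X+X(G+HL)\big]\delta_z<0
\]
by \eqref{eq:fblin_lyap}; since the left-hand side equals $\delta_x'[\dot M+(A+BK)'M+M(A+BK)]\delta_x$ along the closed loop, this is precisely \eqref{eq:ccm1} for $M=\Theta'X\Theta$. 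As $X>0$ and $\Theta$ is smooth and nonsingular, $M$ is a smooth positive-definite metric, and Theorem 1 then yields universal stabilizability.

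I expect the difficulty to lie in two bookkeeping points rather than any deep obstruction. The first is confirming that the induced $K$ is genuinely affine in $u$, which rests on $\alpha,\beta$ being functions of $(x,t)$ alone so that all $u$-dependence enters linearly through $v$. The second, and more delicate, is the uniform bound $\alpha_1I\le M\le\alpha_2I$ needed to invoke Theorem 1: this requires $\Theta$ to be uniformly bounded and uniformly nonsingular, a regularity property of the global diffeomorphism $\theta$ that should be assumed or extracted from the structure of the linearizing transformation.
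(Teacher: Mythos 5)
Your proposal is correct and follows essentially the same route as the paper: pull back the constant Lyapunov metric $X$ through the Jacobian $\Theta$ of the linearizing diffeomorphism, push the differential feedback $\delta_v = L\delta_z$ back through the differential of $\bar u = \alpha + \beta v$ to obtain $K = \pder[\bar u]{x} + \beta L \Theta$ (your $\beta(L\Theta - \Phi)$ is the same matrix), and observe that the linear Lyapunov inequality becomes the CCM inequality under congruence by $\Theta$. Your closing remark about needing $\Theta$ uniformly bounded and uniformly nonsingular to secure $\alpha_1 I \le M \le \alpha_2 I$ is a fair caveat that the paper's own proof also leaves implicit.
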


\begin{proof}
We first note that $X=X'>0,L$ satisfying \eqref{eq:fblin_lyap} are guaranteed to exist by assumption that \eqref{eq:fl_zsys} is stabilizable.

The differential dynamics of a linear system \eqref{eq:fl_zsys} are just the dynamics of the system itself:
\[
\dot \delta_z = G\delta_z + H\delta_v.
\]
The global coordinate transformation $z = \theta(x,t)$ admits variation $\delta_z = \Theta(x,t)\delta_x$ where $\Theta:=\pder[\theta]{x}$. Therefore we also have differential coordinates $\delta_z = \Theta(x,t)\delta_x$, so
\begin{align}
\dot \delta_z =&[\dot\Theta \Theta^{-1} + \Theta (A+B\tfrac{\partial {\bar u}}{\partial x}) \Theta^{-1}]\delta_z+  \Theta B \beta\delta_v,\notag
\end{align}
so it must be the case that 
\begin{align}
G &=  [\dot\Theta \Theta^{-1} + \Theta (A+B\tfrac{\partial {\bar u}}{\partial x}) \Theta^{-1}], \quad H = \Theta B \beta.\notag
\end{align}
Now, take \eqref{eq:fblin_lyap} and substitute the above formulas for $G, H$ and we get
\begin{align}
X(\dot\Theta\Theta^{-1}+\Theta A\Theta^{-1})+(\dot\Theta\Theta^{-1}+\Theta A\Theta^{-1})'X& \notag\\
+X\Theta B(\tfrac{\partial {\bar u}}{\partial x} \Theta^{-1}+\beta L) +(\tfrac{\partial {\bar u}}{\partial x} \Theta^{-1}+\beta L)'B'\Theta' X&<0\notag
\end{align}
Multiply on the left by $\Theta'$ and on the right by $\Theta$, and replace $M(x) = \Theta(x)'X\Theta(x)$ and $\dot M(x,u) = \dot\Theta(x,u)'X\Theta(x)+\Theta(x)'X\dot\Theta(x,u)$ and we have the control contraction condition
\begin{align}
\dot M + A'M +MA + MBK + K'B'M<0. \notag
\end{align}
where the gain $K$ is
\[
K(x,u) = \pder[u]{x} = \pder[\bar u]{x} +\pder[\bar u]{v}\pder[v]{x} = \pder[\bar u]{x}+\beta(x)L\Theta(x).
\]
This completes the proof of the theorem.
\end{proof}

\subsection{Mechanical Systems}

For many physical systems, energy is a natural candidate for a control Lyapunov function, and stabilization can be achieved by passivity and damping assignment methods. In fact, for such systems there is also a ``natural'' control contraction metric.

Here we make use of a general fact, straightforward to prove, that if there is a feedback control of the form $u=k(x)+\beta(x)v$ with $\beta(x)$ square and nonsingular, and $v$ an independent control input, that makes a system contracting with respect to some metric $M(x)$, then $M(x)$ is a control contraction metric.

It is well-known that the Euler Lagrange equations for standard-form mechanical system can be
\begin{equation}\label{eq:mechsys}
H(q)\ddot q + (C(q,\dot q)+D(q,\dot q))\dot q + G(q) = u
\end{equation}
where $H(q)>0$ for all $q$ and $\dot H-2C$ is skew-symmetric, damping $D(q,\dot q)\ge 0$, and $u$ is an external torque input.

\begin{theorem} For the class of systems \eqref{eq:mechsys}, and for any $K_P=K_P'>0$, $K_D=K_D'\ge 0$ let
\begin{equation}\label{eq:mech_metric}
\langle \delta_x, \delta_x\rangle :=\frac{1}{2}\begin{bmatrix}\delta_{\dot q}\\ \delta_q\end{bmatrix}'M(q,\dot q)\begin{bmatrix}\delta_{\dot q}\\ \delta_q\end{bmatrix} 
\end{equation}
where, $R = C(q,\dot q)+D(q, \dot q)+K_D$ and
\[
M=\begin{bmatrix} HK_PH & HK_PR\\ R'K_PH & H+R'K_PR\end{bmatrix}.
\]
This defines a control contraction metric, which can be made contracting using a type of PD control:
\[
u(t) = -K_D\dot q(t) -K_P(q(t)-q_0(t))+G(q).
\]
\end{theorem}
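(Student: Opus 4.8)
The plan is to invoke the general fact quoted just before the statement: since \eqref{eq:mechsys} is fully actuated (the input $u$ has the same dimension as $q$ and enters through the nonsingular factor $H^{-1}$), the feedback can be written as $u = k(x) + \beta(x)v$ with $\beta = I$ nonsingular, so it suffices to show that the PD-controlled closed loop is contracting with respect to the metric \eqref{eq:mech_metric}, which then certifies that $M$ satisfies the CCM condition \eqref{eq:ccm1}. Substituting the control $u = -K_D\dot q - K_P(q-q_0) + G(q)$ into \eqref{eq:mechsys} cancels the gravity term and yields the closed loop $H\ddot q + R\dot q + K_P(q-q_0) = 0$ with $R = C + D + K_D$, precisely the $R$ appearing in \eqref{eq:mech_metric}. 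I would then verify contraction along the variational dynamics $\dot\delta_q = \delta_{\dot q}$ together with the first variation of this closed-loop equation.

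The key structural observation is that the metric factors as $M = T'\operatorname{diag}(K_P,H)\,T$ with $T = \begin{bmatrix} H & R\\ 0 & I\end{bmatrix}$, so the change of differential coordinates $\xi_1 = H\delta_{\dot q} + R\delta_q$, $\xi_2 = \delta_q$ block-diagonalises the metric and gives energy $\tfrac{1}{2}(\xi_1'K_P\xi_1 + \xi_2'H\xi_2)$. By the coordinate invariance noted in the Remark, contraction with respect to $M$ is equivalent to contraction of the $\xi$-dynamics with respect to this simpler block-diagonal metric, which is much more transparent to analyse.

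Next I would differentiate the energy along trajectories. The $\xi_2$-equation is immediate, $H\dot\xi_2 = H\delta_{\dot q} = \xi_1 - R\xi_2$. For $\dot\xi_1$ I would substitute $H\dot\delta_{\dot q}$ from the first variation of $H\ddot q + R\dot q + K_P(q-q_0) = 0$; the terms proportional to $R\delta_{\dot q}$ cancel, leaving $\dot\xi_1 = \dot H\delta_{\dot q} - (\delta H)\ddot q - (\delta R)\dot q + \dot R\delta_q - K_P\delta_q$. The crucial step is to dispose of the remaining inertial and Coriolis contributions using the skew-symmetry of $\dot H - 2C$ (equivalently $\dot H = C + C'$) and the Christoffel structure of $C(q,\dot q)$, which forces the energy-conserving part of these terms to vanish in the quadratic form, leaving a contribution governed by the damping $D + K_D$ and the gain $K_P$.

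I expect the main obstacle to lie exactly in this cancellation: one must track carefully how the variation $\delta$ and the time derivative act differently on the state-dependent $H(q)$ and $C(q,\dot q)$, and then show that the off-diagonal cross terms introduced by $R$ in the metric are precisely what is needed to convert the skew-symmetric Coriolis terms into a sign-definite expression. These cross terms are not cosmetic, so the delicate point is to establish the strict inequality \eqref{eq:ccm1} (with rate $\lambda$) rather than mere marginal contraction; particular care is required in identifying where strict negativity comes from when $K_D$ or $D$ is only positive semidefinite, which is where I would concentrate the detailed estimates.
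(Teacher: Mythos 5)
The paper itself omits the proof, deferring to Theorem~2 of \cite{lohmiller1997applications}, so there is no line-by-line argument to compare against; what can be said is that the cited argument is a virtual-system (partial contraction) argument, and that your plan, which instead takes the full first variation of the closed loop, breaks down at exactly the step you flagged as delicate. Your setup is fine: the closed loop is indeed $H\ddot q + R\dot q + K_P(q-q_0)=0$, the factorization $M = T'\,\mathrm{diag}(K_P,H)\,T$ with $T=\begin{bmatrix} H & R\\ 0 & I\end{bmatrix}$ is correct, and the ``general fact'' applies (though the independent input is $q_0$, so the nonsingular factor is $\beta=K_P$, not $I$). The gap is that the cancellation you are counting on does not occur. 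Skew-symmetry of $\dot H - 2C$ disposes of the pairing $\tfrac12\xi_2'\dot H\xi_2 - \xi_2'C\xi_2$ coming from the $\tfrac12\delta_q'H\delta_q$ block, but it says nothing about the residue $\dot H\delta_{\dot q} - (\delta H)\ddot q - (\delta R)\dot q + \dot R\delta_q$ left in $\dot\xi_1$ after the $R\delta_{\dot q}$ terms cancel. A one-degree-of-freedom check makes this concrete: take $H=h(q)$, $C=\tfrac12 h'(q)\dot q$, $D=0$, $G=0$. Carrying out your own recipe, the full variational dynamics give
\[
\dot\xi_1 = \tfrac12 h'\dot q\,\delta_{\dot q} - \bigl(\tfrac12 h'\ddot q + k_P\bigr)\delta_q ,
\]
so on the slice $\delta_q=0$ one gets $\tfrac{d}{dt}V = \tfrac12 k_P\, h\, h'\dot q\,\delta_{\dot q}^2$, which is \emph{positive} whenever $h'\dot q>0$. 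The metric therefore does not contract under the true differential dynamics of the closed loop whenever $H$ depends on $q$, and no refinement of the Christoffel bookkeeping will change that sign.

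The route consistent with the cited reference is to avoid varying the state-dependent coefficient matrices at all: regard $H(q(t))$, $C(q(t),\dot q(t))$, $D$ as exogenous time-varying matrices evaluated along the actual trajectory and prove contraction of the virtual system $H(q(t))\dot y + (C+D+K_D)y + K_P(z-q_0)=0$, $\dot z = y$, with respect to $\tfrac12(Hy_\delta+Rz_\delta)'K_P(Hy_\delta+Rz_\delta)+\tfrac12 z_\delta' H z_\delta$; there the offending terms $(\delta H)\ddot q$ and $(\delta C)\dot q$ never appear and $\dot H = C+C'$ does all the work, with both the plant trajectory and the target as particular solutions. If you rewrite your proof in that form you still need to resolve the second point you raised --- strict decay at rate $\lambda$ when $D+K_D$ is only positive semidefinite, which must be extracted from the cross-coupling through $K_P$ rather than from the diagonal blocks --- but at least the computation then closes. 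As written, your proposal asserts rather than establishes the central cancellation, and the assertion is false for the full variational dynamics.
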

We omit the proof as it is similar to the proof of Theorem 2 in \cite{lohmiller1997applications}. Here $q_0(t)$ is considered the independent input.

\begin{remark}
If $K_P=0$ then the control contraction metric given above reduces to $\frac{1}{2}\delta_q'H(q)\delta_q$, i.e. the Riemannian metric associated with kinetic energy.
\end{remark}

\section{Differential $L^2$-Gain and Robust Control}\label{sec:robust}

The small-gain theory and $H^\infty$ control are cornerstones of rigorous analysis and control of uncertain feedback systems. Extensions of $H^\infty$ to nonlinear systems have been widely studied and typically make use of a time-domain $L^2$-gain formulation (see, e.g., \cite{schaft1999l2}, \cite{helton1999extending}).

To begin, consider a state-space system driven by disturbance $w$, with output $y$:
\begin{equation}\label{eq:sysw}
\dot x = f(x,w,t), y = g(x,w,t).
\end{equation}
Assuming that $\dot x = x=y=w=0$ is a solution, we say that the system has $L^2$ gain of $\alpha$ if
\begin{equation}\label{eq:L2}
\int_0^T|y|^2dt \le \alpha^2\int_0^T|w|^2dt
\end{equation}
for all solutions with $T>0$, $w\in L^2[0,T]$, and $x(0)=0$. The standard way to verify \eqref{eq:L2} is to search for a storage function $V$ satisfying the integral constraint:
\[
V(x(T))-V(x(0)) \le \int_0^T[\alpha^2|w|^2-|y|^2]dt
\]
or its differential form $\dot V  \le \alpha^2|w|^2-|y|^2$, which is clearly convex (linear) in $V$.

The search for a {\em feedback controller} guaranteeing that the closed-loop system satisfies an $L^2$ gain involves the challenging problem of solving a Hamilton-Jacobi-Isaacs partial differential equation  \cite{schaft1999l2}, \cite{helton1999extending}. We also note the mixed pointwise-LMI and PDE approach of \cite{lu1997robustness} for a particular class of systems.


We say that incremental $L^2$ gain is bounded by $\alpha>0$ if the following condition, stronger than \eqref{eq:L2}, is satisfied \cite{desoer1975feedback}:
\begin{equation}\label{eq:incL2}
\int_0^T|y_1-y_2|^2dt \le \alpha^2\int_0^T|w_1-w_2|^2dt
\end{equation}
for all $T>0$ and all pairs of solutions $w_1\rightarrow y_1$ and $w_2\rightarrow y_2$ with $w_1,w_2\in L^2[0,T]$ and equal initial conditions. This can be considered as condition of Lipschitz continuity of the nonlinear operator $w\mapsto y$. The main difference is that $L^2$ gain has a particular ``favored'' solution (e.g. the origin) about which gains are computed, while incremental $L^2$ gain must hold about all solutions.

There are well-known equivalences between \eqref{eq:incL2} and a bound on the gain of the Gateaux derivative of the operator $w\mapsto y$ when such a derivative exists. This has previously utilized to give a generalization of the gap metric in \cite{georgiou1993differential}, analysis of gain scheduling in \cite{fromion2003theoretical}, nonlinear $H^\infty$ control in \cite{fromion2001weighted}, and linear approximation of nonlinear systems in \cite{schweickhardt2009system}.

For nonlinear state-space systems, the Gateaux derivative, when it exists, is given by the differential dynamics (see, e.g, \cite{georgiou1993differential}, \cite{fromion2003theoretical}):
\begin{align}\label{eq:diffdynL2}
\dot\delta_x& = A(x,w,t)\delta_x+B_w(x,w,t)\delta_w \\ \delta_y &= C(x,w,t)\delta_x + D(x,w,t)\delta_w\label{eq:diffdynL22}
\end{align}
where $A = \pder[f]{x}, B = \pder[f]{w}, C = \pder[g]{x}, D = \pder[g]{w}$. And we can consider the {\em differential $L^2$ gain bound} of $\alpha$:
\begin{equation}\label{eq:diffL2}
\int_0^T|\delta_y|^2dt \le \alpha^2\int_0^T|\delta_w|^2dt
\end{equation}
for all $T$ and all solutions of \eqref{eq:sysw}, \eqref{eq:diffdynL2} with $w, \delta_w\in L^2[0,T]$.


Using $\delta_x'M(x,t)\delta_x$ as a differential storage function, it is straightforward to show that satisfaction of the following pointwise LMI:
\begin{equation} \notag
\begin{bmatrix}
\dot M+A'M + MA +C'C& MB_w + C'D\\B_w'M + D'C&D'D-\alpha^2 I\end{bmatrix} \le 0
\end{equation}
guarantees that the system is differentially $L^2$ bounded by $\alpha$ and, therefore, incrementally $L^2$ bounded by $\alpha$. 

The search for a feedback controller guaranteeing incremental or differential $L^2$ gain has been considered in, e.g., \cite{fromion2001weighted} and \cite{fromion2003theoretical}. The search for a {\em differential} gain $\delta_u = K(x)\delta_x$ is straightforward, but it is difficult to ensure that it is integrable, i.e the existence of a feedback $u = k(x)$ such that $\pder[k]{x} = K(x)$.

\subsection{Robust Control Design using CCM}
In this section, we show that the CCM method can be used to chart a middle path between guaranteeing $L^2$ gain for a {\em particular} solution, e.g. the origin, and guaranteeing differential/incremental $L^2$ gain. Consider systems of the form:
\begin{equation}\label{eq:sys_uw}
\dot x = f(x,w,t) +B(x,t)u, \ \ y = g(x,u,t).
\end{equation}
The objective is to use the feedback control $u$ to reduce the effect of disturbance $w$ on output $y$.
The differential dynamics of the system have the form
\begin{align}\label{eq:diffdyn_uw}
\dot \delta_x &= A(x,u,w,t)\delta_x+B(x,t)\delta_u+B_w(x,w,t)\delta_w,\\ \delta_y &= C(x,t)\delta_x+D(x,t)\delta_u\label{eq:diffdyn_uw2}
\end{align}
where $A, B_w, C, D$ are the Jacobians of $\dot x$ and $y$ with respect to $x, w, x$ and $u$ respectively.

\begin{theorem}
Consider the system \eqref{eq:sys_uw} and associated differential dynamics \eqref{eq:diffdyn_uw},  \eqref{eq:diffdyn_uw2}.
Suppose there exists a $W(x,t)=W(x,t)'>0, Y(x,t,u)$, with the latter affine in $u$, that satisfies
\begin{equation}\label{eq:WcondHinf}
\begin{bmatrix}
\mathcal W & WC'+Y'D'\\CW+DY & I\end{bmatrix}\ge 0,
\end{equation}
for all $x,u,w,t$, with
\[
\mathcal W = \dot W -WA' -AW-BY-Y'B- \frac{1}{\alpha^2}B_wB_w',
\]
then for any target solution $u^\star, x^\star, w^\star$ there exists a state-feedback controller for which 
\begin{equation}
\int_0^T|y-y^\star|^2dt \le \alpha^2\int_0^T|w-w^\star|^2dt,
\end{equation}
for any external input $w$ and any $T>0$.
\end{theorem}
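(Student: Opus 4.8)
The plan is to mirror the logic that connected the pointwise LMI to the differential $L^2$ bound in the open-loop case, but now to supply the missing feedback via the CCM path-integral construction. First I would apply the Schur complement to \eqref{eq:WcondHinf}: since the bottom-right block is $I>0$, the condition is equivalent to
\begin{equation}\notag
\mathcal W - (WC'+Y'D')(CW+DY) \ge 0.
\end{equation}
Substituting the definition of $\mathcal W$ and setting $W=M^{-1}$, $Y=KW$ (so that $K=YW^{-1}$ is affine in $u$), I would then conjugate the resulting inequality by $M$ on both sides. The aim is to turn the $W$-form into the equivalent $M$-form differential dissipation inequality
\begin{equation}\notag
\begin{bmatrix}
\dot M + (A+BK)'M + M(A+BK) + (C+DK)'(C+DK) & MB_w\\
B_w'M & -\alpha^2 I
\end{bmatrix}\le 0,
\end{equation}
which is exactly the closed-loop version of the open-loop LMI stated earlier in the excerpt, with $A\mapsto A+BK$, $C\mapsto C+DK$, and zero feedthrough on $\delta_w$. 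The algebra here is routine Schur-complement bookkeeping, so I would not belabour it.

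Second, I would interpret this $M$-form inequality along the differential dynamics \eqref{eq:diffdyn_uw}, \eqref{eq:diffdyn_uw2} under the differential feedback $\delta_u = K(x,u,t)\delta_x$. With differential storage $V_\delta = \delta_x'M(x,t)\delta_x$, a direct computation of $\dot V_\delta$ along the closed-loop differential flow, combined with the displayed matrix inequality, yields the pointwise dissipation estimate
\begin{equation}\notag
\dot V_\delta \le \alpha^2|\delta_w|^2 - |\delta_y|^2
\end{equation}
for the closed-loop differential system. This is the differential $L^2$-gain certificate, entirely analogous to the open-loop storage-function argument already sketched before the theorem.

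Third comes the step that actually produces a true feedback $u=k(x,t)$ rather than merely a differential gain, and this is where the CCM machinery is essential. I would define the applied control by integrating $\delta_u=K\delta_x$ along a minimal geodesic $\gamma(s,t)$ joining $x^\star(t)$ to $x(t)$, exactly as in \eqref{eq:control_int}, starting from the target input $u^\star$; affineness of $K$ in $u$ guarantees this integral equation is well posed. Taking the two solutions $(x^\star,w^\star,y^\star)$ and $(x,w,y)$ as endpoints, I would integrate the differential dissipation inequality along $\gamma$ in the variable $s$ and identify $\frac{d}{ds}$ of the energy $e(x^\star(t),x(t))$ via the first-variation formula \eqref{eq:control_cone}; integrating in $s$ over $[0,1]$ converts the pointwise differential estimate into the finite dissipation inequality $\frac{d}{dt}e(x^\star(t),x(t)) \le \alpha^2|w-w^\star|^2 - |y-y^\star|^2$. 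Finally, integrating in $t$ over $[0,T]$ and using $e\ge 0$ together with $e(x^\star(0),x(0))=0$ (equal initial conditions) gives the claimed bound $\int_0^T|y-y^\star|^2 dt \le \alpha^2\int_0^T|w-w^\star|^2 dt$.

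The main obstacle I anticipate is the path-integration step: one must verify that integrating the \emph{differential} dissipation inequality along the geodesic legitimately bounds the \emph{finite} energy derivative $\frac{d}{dt}e$, including careful handling of the disturbance terms $\delta_w$ along the path and the case of non-unique geodesics (where, as noted after \eqref{eq:control_cone}, $\frac{d}{dt}$ must be read as a right-derivative and an infimum over minimal geodesics is taken). The $L^2$-gain setting complicates this relative to the pure stabilization proof, because $\delta_w$ enters the differential dynamics and the relationship between the disturbance increment $w-w^\star$ and its infinitesimal counterpart $\delta_w$ integrated along $\gamma$ must be controlled; ensuring these integrate consistently to the squared finite disturbance norm is the delicate part, whereas the Schur-complement and storage-function steps are essentially mechanical.
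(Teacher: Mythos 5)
Your overall route is the same as the paper's: Schur complement to pass from the $W$-form LMI to a differential dissipation inequality $\frac{d}{dt}(\delta_x'M\delta_x)\le\alpha^2|\delta_w|^2-|\delta_y|^2$ under $\delta_u=K\delta_x$ with $M=W^{-1}$, $K=YW^{-1}$, then integration of the path-integral controller along a minimal geodesic and of the energy over $[0,T]$ with $e(0)=0$, $e(T)\ge 0$. However, the two points you defer are exactly where the remaining content lies, and you should close them. First, the ``delicate part'' you flag about reconciling $\delta_w$ with $w-w^\star$ has a one-line resolution that you need to state: parameterize the disturbance along the path affinely, $\bar w(s,t)=(1-s)w^\star(t)+s\,w(t)$, so that $\delta_w=\partial\bar w/\partial s=w-w^\star$ is \emph{independent of $s$} and $\int_0^1|\delta_w|^2\,ds=|w-w^\star|^2$ exactly. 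Without choosing this interpolation the quantity $\int_0^1|\delta_w|^2\,ds$ need not equal (or be bounded by) $|w-w^\star|^2$, and the claimed dissipation inequality for $e$ does not follow; this is a genuine choice, not a technicality, since the hypothesis \eqref{eq:WcondHinf} holds for all $w$ and hence along the whole interpolated path.

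Second, integrating the differential inequality over $s$ gives $\frac{d}{dt}e\le -\int_0^1|\delta_y|^2\,ds+\alpha^2|w-w^\star|^2$, and the passage from $\int_0^1|\delta_y|^2\,ds$ to $|y-y^\star|^2$ is not an identity: you need the Cauchy--Schwarz (Jensen) inequality $\bigl|\int_0^1\delta_y\,ds\bigr|^2\le\int_0^1|\delta_y|^2\,ds$, which works here only because the output term enters with a negative sign. Your write-up silently asserts the finite inequality $\frac{d}{dt}e\le\alpha^2|w-w^\star|^2-|y-y^\star|^2$ without this step. (The paper applies Cauchy--Schwarz after integrating over $t$ rather than pointwise in $t$, but either order is fine.) With these two additions your argument coincides with the paper's proof.
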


\begin{proof}
Set $M=W^{-1}, K = YM$ and apply Schur complement, then \eqref{eq:WcondHinf} is equivalent to
\begin{align}
\delta_x'(\dot M +A'M+MA-MBK-K'B'M)\delta_x& \\+2\delta_x'MB_w \delta_w -\alpha^2|\delta_w|^2+|\delta_y|^2&\le 0\label{eq:MdotHinf}
\end{align}
for all $x,u,w,t$ and all $\delta_x, \delta_w$. With the differential control law $\delta_u=K(x,u,t)\delta_x$, and $\langle \delta_x, \delta_x\rangle_x = \delta_x'M(x)\delta_x$ we have
\begin{equation}\label{eq:L2decrease}
\frac{d}{dt}\langle \delta_x, \delta_x\rangle_x\le \alpha^2|\delta_w|^2 -|\delta_y|^2.
\end{equation}

Consider the family of minimal geodesics $\gamma(s,t)$ connecting $x$ to $x^\star$, with $\gamma(0,t) = x^\star(t)$, $\gamma(1,t) = x(t)$ and parameterize the disturbance as $\bar w(s,t) = (1-s)w^\star(t)+sw(t)$ so $\delta_w=\pder[\bar w]{s} = w-w^\star$ for all $s$. Then the path-integral control law $u(s,t)$ \eqref{eq:control_int}  satisfies \eqref{eq:L2decrease} for any $x = \gamma(s,t)$ and $\delta = \gamma_s(s,t)$. Now, consider the energy function:
\[
e(x(t), x^\star(t)) = \int_0^1 \langle \gamma_s(s,t), \gamma_s(s,t)\rangle_{\gamma(s,t)}ds
\]
then setting $e(t)=e(x(t), x^\star(t))$ we have
\begin{align}
e(T) -e(0) =& \int_0^T \dot e(x(t),x^\star(t)) dt \le \int_0^T \int_0^1\frac{d}{dt}\langle \gamma_s, \gamma_s\rangle_{\gamma}dsdt \notag\\
\le &  \int_0^T \int_0^1-|\delta_y|^2+\alpha^2|\delta_w|^2 dsdt \notag\\
=&\int_0^T \int_0^1-|\delta_y|^2dsdt + \int_0^T\alpha^2|w-w^\star|^2 dt \notag
\end{align}
By assumption of equal initial conditions $e(0) = 0$, and for any $T$, $e(T)\ge 0$, so we have
\[
\int_0^T \int_0^1|\delta_y|^2dsdt \le \int_0^T\alpha^2|w-w^\star|^2 dt
\]
any by applying the $L^1$ Cauchy-Schwarz inequality to the left-hand side integration over $s$, we have
\[
\int_0^T |y-y^\star|^2dt \le \int_0^T\alpha^2|w-w^\star|^2 dt.
\]
This completes the proof of the theorem.
\end{proof}

Note that in general the differential control gain is not integrable, so this control strategy does {\em not} necessarily ensure the incremental gain of the system is bounded between {\em any} pair of solutions of the closed-loop system, so it is weaker than incremental/differential $L^2$ gain. However, it is stronger than $L^2$ gain about the origin, since it implies that the {\em same} metric $M(x,t)$ can be used to bound the gain from {\em any} specified target trajectory.

\section{Observer Design}\label{sec:observer}

It is well-known that the problems of control design and observer design for linear systems have a very attractive ``duality'' (see, e.g., \cite{hespanha2009linear}). In this section, we show that control contraction metrics admit a related duality with nonlinear observer designs using Riemannian metrics, as appearing for example in \cite{tsinias1989observer}, \cite{tsinias1990further}, \cite{sanfelice2012convergence}. We also provide a simple construction of a reduced order observer which fits within the framework of \cite{karagiannis2008invariant}.

Now we consider the problem of state observers, for the non-autonomous system
\begin{equation}\label{eq:obs_sys}
\dot x = f(x,t), \ 
y=c(x,t)
\end{equation}
with differential dynamics
\[
\dot \delta_x = A(x,t)\delta_x, \ \delta_y=C(x,t)\delta_x(t).
\]
with $A(x,t) = \pder[f]{x}$ and $C(x,t) = \pder[c]{x}$.

Following the linear theory, if we simply replace $A(x,t)$ with $A(x,t)'$ and $B(x,t)$ with $C(t)'$ in \eqref{eq:CCM_rhoform} we obtain the condition
\begin{equation}\label{eq:OCM}
\dot M+A'M+MA-\rho C'C< 0,
\end{equation}
where $\alpha_1 I\le M(x,t) \le \alpha_2 I$ for all $x,t$. By Finsler's theorem, \eqref{eq:OCM} is equivalent to the statement that
\begin{equation}\label{eq:Obs_Finsler}
C(x,t)\delta_x(t)= 0 \Longrightarrow \frac{d}{dt}[\delta_x'M(x,t)\delta_x]<0.
\end{equation}
That is, state differentials tangent to the set $\{x:y = c(x,t)\}$ are contracting. Exponential contraction with rate $\lambda$ is given be replacing the $<0$ with $\le -2\lambda M$.

When $M$ was constant or dependent on time, but independent of $x$, conditions of this form were studied in \cite{tsinias1989observer} and \cite{tsinias1990further}. It was shown that a semi-globally stable observer can be constructed and that this can be extended to global stability if a global bound on $\pder[f]{x}$ is satisfied. State-dependent metrics satisfying \eqref{eq:OCM} were studied  \cite{sanfelice2012convergence} in the context of necessary conditions for an observer to make a Riemannian error metric contract, and a semi-globally convergent observer design was  proposed for single-output systems.

\subsection{A Globally Convergent Reduced-Order Observer}

We first consider systems with a linear output map:
\[
\dot x = f(x,t), \quad y = Cx,
\]
the signals
$u(t)$  and $y(t)$ are measured, and the objective is to estimate $x$. In fact, will assume that the state basis is chosen so that
$
C = [I_p \quad 0_{p\times (n-p)}].
$
If the state is not initially in this form, this can easily be achieved by the change of coordinates
\[
x\mapsto \begin{bmatrix}C\\R\end{bmatrix}x
\]
where the rows of $R$ are a basis for $\ker C$. 

Suppose there is a metric $M(x,t)$ and scalar function $\rho(x,t)$ satisfying \eqref{eq:OCM} such that $M$ decomposes as
\[
M = \begin{bmatrix}M_{11}(x,t) & M_{21}'(t)\\ M_{21}(t) & M_{22}(t)\end{bmatrix},
\]
i.e. only the upper-left block depends on $x$. Since \eqref{eq:OCM} is equivalent to \eqref{eq:Obs_Finsler}, and differentials satisfying $C\delta_x=0$ are of the form $[0_{1\times p},\quad \delta_2']'$, the lower-right block of the condition \eqref{eq:OCM}:
\begin{equation}\label{eq:M22cond}
\dot M_{22} + M_{21}A_{12} + A_{12}'M_{21}' + M_{22}A_{22} + A_{22}'M_{22} \le - 2\lambda M_{22}.
\end{equation}
is an equivalent condition for exponential convergence.

Define $C^+$ and $P$ as pseudo-inverse and projection operators defined with respect to the inner-product space $\langle a, b\rangle = a'M(x,t)b$:
\begin{align}
C^+=\begin{bmatrix}I_p\\ -M_{22}^{-1} M_{21}\end{bmatrix},\\
P = [M_{22}^{-1} M_{21} \quad I_{n-p}].
\end{align}

\begin{theorem} Suppose there is a solution  to \eqref{eq:M22cond} with $M_{22}(t)>\alpha I$, $\alpha>0$, then the following reduced-order observer is globally exponentially stable with rate $\lambda$:
\begin{align}
\dot {\hat w} = \dot P\hat x + Pf(\hat x,t), \quad \hat x = C^+y + \begin{bmatrix}0\\\hat w\end{bmatrix}.
\end{align}
\end{theorem}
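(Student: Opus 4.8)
The plan is to track the estimation error in the reduced coordinate $w := Px$ rather than in the full state, exploiting that the measured components are reconstructed exactly. First I would set $w := Px$ and $e := \hat w - w$, and record the two algebraic identities $CC^+ = I_p$ and $PC^+ = 0$. Together with $y = Cx$ these give $\hat x - x = \begin{bmatrix}0\\ e\end{bmatrix}$, i.e. the full-state error is $e$ in the unmeasured block and identically zero in the measured block. The crucial structural point is that $C(\hat x - x) = 0$ for all $t$: the error is confined to $\ker C$, which is exactly the subspace along which \eqref{eq:OCM}, through its Finsler/kernel form \eqref{eq:Obs_Finsler}, forces contraction.

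Next I would derive the error dynamics. Differentiating $w = Px$ gives $\dot w = \dot P x + Pf(x,t)$, and subtracting this from the observer definition of $\dot{\hat w}$ yields $\dot e = \dot P(\hat x - x) + P\big(f(\hat x,t)-f(x,t)\big)$. Since the lower block of $P = [\,M_{22}^{-1}M_{21}\ \ I_{n-p}\,]$ is the constant identity, $\dot P$ has zero lower block; combined with $\hat x - x = \begin{bmatrix}0\\ e\end{bmatrix}$ this makes $\dot P(\hat x - x)$ vanish identically, so $\dot e = P\big(f(\hat x,t)-f(x,t)\big)$.

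Then I would invoke the fundamental theorem of calculus along the segment $x_s := x + s\begin{bmatrix}0\\ e\end{bmatrix}$, writing $f(\hat x,t)-f(x,t) = \int_0^1 A(x_s,t)\begin{bmatrix}0\\ e\end{bmatrix}\,ds$. Because the displacement lies in $\ker C$, the product $P\,A(x_s,t)\begin{bmatrix}0\\ e\end{bmatrix}$ collapses to $\big(M_{22}^{-1}M_{21}A_{12}(x_s,t)+A_{22}(x_s,t)\big)e$. Taking $V := e'M_{22}(t)e$ as a Lyapunov candidate, differentiating, and symmetrizing the cross term, I expect $\tfrac{d}{dt}V = \int_0^1 e'\,Q_{22}(x_s,t)\,e\,ds$, where $Q_{22}$ denotes the matrix on the left-hand side of \eqref{eq:M22cond}. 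Since \eqref{eq:OCM} holds for all $x,t$ and is equivalent to \eqref{eq:M22cond} holding for all $x,t$, the integrand is bounded by $-2\lambda\, e'M_{22}e$ for every $s$, which delivers $\tfrac{d}{dt}V \le -2\lambda V$.

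Finally, a comparison (Gr\"onwall) argument gives $V(t)\le e^{-2\lambda t}V(0)$, and the uniform bounds $\alpha_1 I \le M_{22}(t)\le \alpha_2 I$, inherited as a principal submatrix of $M$, convert this into $|e(t)|\le \sqrt{\alpha_2/\alpha_1}\,e^{-\lambda t}|e(0)|$; since $|\hat x - x| = |e|$, the full estimate converges globally exponentially with rate $\lambda$. I expect the only delicate points to be conceptual rather than computational: checking that $\dot P(\hat x - x) = 0$ so that no stray term survives, and confirming that $\tfrac{d}{dt}V$ reduces \emph{exactly} to the block form $Q_{22}$ evaluated along $x_s$. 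This reduction is the crux of the argument, because it is what lets the reduced-order observer inherit the very metric inequality \eqref{eq:M22cond} that certifies full-order contraction.
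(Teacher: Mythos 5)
Your proof is correct, and it rests on exactly the same two pillars as the paper's: the observation that $\dot P$ annihilates vectors of the form $\begin{bmatrix}0\\ e\end{bmatrix}$ (so no stray term survives in the reduced dynamics), and the fact that \eqref{eq:M22cond} certifies decrease of the quadratic form built from $M_{22}$. The packaging differs. The paper argues in the contraction-theoretic style: it computes the \emph{differential} dynamics of the $\hat w$ system driven by the measured signal $y$, shows $\delta_w'M_{22}\delta_w$ is exponentially decreasing, and separately verifies that the true state generates a particular solution of the observer, so that convergence of all trajectories to one another yields convergence to the truth. You instead work directly with the error $e=\hat w - Px$, write $f(\hat x,t)-f(x,t)$ as an integral of $A$ along the straight segment $x_s$, and run a Lyapunov/Gr\"onwall argument on $V=e'M_{22}e$. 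Your route makes explicit a step the paper delegates to contraction machinery, namely how the pointwise differential inequality integrates up to convergence of the actual (finite) error; this works here precisely because $M_{22}$ is independent of $x$, so the straight line in the unmeasured coordinates is the relevant geodesic and \eqref{eq:M22cond} evaluated at every $x_s$ suffices. The paper's phrasing is more modular (contracting system plus particular solution generalizes immediately to other particular solutions and inputs), while yours is more elementary and self-contained. One cosmetic point: the theorem hypothesis only states the lower bound $M_{22}(t)>\alpha I$, so the constant $\sqrt{\alpha_2/\alpha_1}$ in your final estimate should be justified by the uniform upper bound $M\le\alpha_2 I$ assumed earlier in the section, as you correctly note in passing.
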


\begin{proof}
We show that the $\hat w$ system is contracting exponentially for any input $y(t), u(t)$, and that $\hat x = x$ is a particular solution, which proves that all solutions have $\hat x \rightarrow x$ exponentially.

The differential dynamics of $w$ are:
\begin{align}
\dot\delta_w =& \frac{d}{dt} ([M_{22}^{-1} M_{21} \quad I_{n-p}]) \begin{bmatrix}0\\ I_{n-p}\end{bmatrix} \delta_w \notag\\ &+ [M_{22}^{-1} M_{21} \quad I_{n-p}]A(x,t)\begin{bmatrix}0\\ I_{n-p}\end{bmatrix} \delta_w \notag
\end{align}
where the first term evaluates to zero.

To show that the $\hat w$ system is contracting we use the metric $\delta_w'M_{22}\delta_w$.  Differentiating with respect to time gives
\[
\delta_w'(\dot M_{22} + 2M_{21}A_{12}(x) + 2M_{22}A_{22}(x))\delta_w \le -2\lambda \delta_w'M_{22}\delta_w.
\]
where the inequality comes from \eqref{eq:M22cond}.

We also show that the true state is a particular solution. Indeed, suppose $\hat w = w$ and $\hat x = x$, then $f(\hat x,t) = f(x,t) = \dot x$ and
\begin{align}
\dot {\hat x} &= \dot C^+y+ C^+\dot y +\begin{bmatrix}0\\ \dot P\end{bmatrix}\begin{bmatrix}y\\ w\end{bmatrix}+ \begin{bmatrix}0\\ P\end{bmatrix}\begin{bmatrix}\dot y\\ \dot w\end{bmatrix} \notag\\
& = \begin{bmatrix}I_p\\ -M_{22}^{-1} M_{21}\end{bmatrix}\dot y + \begin{bmatrix}0 & 0 \\ M_{22}^{-1} M_{21} & I_{n-p}\end{bmatrix}\begin{bmatrix}\dot y\\ \dot w\end{bmatrix}=\begin{bmatrix}\dot y\\ \dot w\end{bmatrix} = \dot x.
\end{align}
Here we have used the fact that
\[
\dot C^+y = -\begin{bmatrix}0\\ \dot P\end{bmatrix}\begin{bmatrix}y\\ w\end{bmatrix}.
\]
Since all solutions of $\hat w$ converge, and the true state is a particular solution, all initial conditions converge to the true state.
\end{proof}

\subsection{Nonlinear Output Maps and State-Dependent Metrics}

Let us briefly consider a way to extend the above results to problems with nonlinear output maps. For brevity we consider systems \eqref{eq:obs_sys} independent of time. The approach we suggest is to search for a nonlinear change of coordinates that puts the system in the form in the previous subsection. We propose a method to convexify the joint search for this change of coordinates and a contraction metric, adapting recent research in nonlinear system identification \cite{tobenkin2010convex}.

We wish to find  $r:\RR^n\rightarrow \RR^{n-p}$ so that the mapping
\[
z=\phi(x) = \begin{bmatrix}c(x)\\r(x)\end{bmatrix}
\]
is a global diffeomorphism.

\begin{theorem}
Suppose there exists a $r(x)$ defining $\phi(x)$ as above, which satisfies the following conditions, which are jointly convex in $\rho(x), r(x)$ and matrix variable $Q = Q'>0$:
\begin{align}
(\Phi+F)'Q^{-1}(\Phi+F) + Q & \notag \\
-(\Phi-F)-(\Phi-F)' -2\rho C'C& \le -4\lambda I,\label{eq:convexrel}\\
\Phi(x)+\Phi(x)'&\ge 2\mu I,\label{eq:Ebnd}
\end{align}
where $\Phi(x) = \pder[\phi]{x}$ and $F(x) = \pder{x}(\Phi(x)f(x))$. Then there exists a reduced order observer for the system, using the construction in the previous subsection.
\end{theorem}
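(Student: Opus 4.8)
The plan is to reduce the nonlinear-output problem to the linear-output, constant-metric setting of the previous subsection by means of the change of coordinates $z=\phi(x)=[c(x)';r(x)']'$. In the new coordinates the output map is linear, $y=c(x)=C_z z$ with $C_z=[I_p\ 0]$, while the dynamics become $\dot z=\tilde f(z)=\Phi(x)f(x)$, whose Jacobian is $\tilde A=F\Phi^{-1}$, using $F=\partial(\Phi f)/\partial x=\tilde A\Phi$. For this transformation to be legitimate I first need $\phi$ to be a global diffeomorphism, and I would obtain this from \eqref{eq:Ebnd}. Integrating $\Phi+\Phi'\ge 2\mu I$ along the segment joining any two points $a,b$ yields the strong-monotonicity estimate $(\phi(a)-\phi(b))'(a-b)\ge \mu|a-b|^2$, which forces $\phi$ to be injective and proper; since its Jacobian $\Phi$ is everywhere nonsingular, $\phi$ is then a global diffeomorphism onto $\RR^n$ by a Hadamard-type argument.

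Next I would exhibit the contraction metric. The natural candidate is the constant metric $Q^{-1}$ in $z$-coordinates, i.e. $M(x)=\Phi(x)'Q^{-1}\Phi(x)$ in the original coordinates. The key simplification is that substituting $M=\Phi'Q^{-1}\Phi$ into the observer condition \eqref{eq:OCM} makes the state-Jacobian terms cancel: since $F=\dot\Phi+\Phi A$, one finds $\dot M+A'M+MA=F'Q^{-1}\Phi+\Phi'Q^{-1}F$, so the exponential form of \eqref{eq:OCM} reads exactly $F'Q^{-1}\Phi+\Phi'Q^{-1}F+2\lambda\Phi'Q^{-1}\Phi-\rho C'C\le 0$, where $C=\partial c/\partial x$ comprises the top $p$ rows of $\Phi$. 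This inequality is bilinear and indefinite in $(Q,\Phi,F)$; I would convexify it into the form \eqref{eq:convexrel} by writing the cross term as $(\Phi+F)'Q^{-1}(\Phi+F)$ minus squares and then dominating the remaining concave pieces via the completion-of-squares inequality $X'Q^{-1}X\ge X+X'-Q$. The matrix-fractional term $(\Phi+F)'Q^{-1}(\Phi+F)$ is jointly convex in $(\Phi+F,Q)$, and since $\Phi$ and $F$ are affine in $r$ and $\rho$ enters linearly, the resulting constraint is convex in $(r,\rho,Q)$ and becomes an LMI after a Schur complement. Because $C_z=[I_p\ 0]$, the term $\rho C'C$ touches only the output block, so \eqref{eq:convexrel} enforces contraction precisely on the directions in $\ker C$, which is all that is required, cf. \eqref{eq:Obs_Finsler} and \eqref{eq:M22cond}.

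With a constant metric $Q^{-1}$ in hand, the hypotheses of the reduced-order observer theorem of the previous subsection are met trivially: writing $Q^{-1}$ in the block form compatible with $C_z=[I_p\ 0]$, every block is state-independent and the lower-right block $M_{22}$ is a constant positive-definite matrix, so $M_{22}>\alpha I$ for some $\alpha>0$. The relaxed inequality \eqref{eq:convexrel} then guarantees the block condition \eqref{eq:M22cond} with rate $\lambda$. I would invoke that theorem to construct the globally exponentially convergent reduced-order observer for the $z$-system and map it back through $\phi^{-1}$, which exists by the first step, to obtain an observer for the original system.

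The main obstacle I anticipate is the global-invertibility step: local nonsingularity of $\Phi$ is not enough, and the role of \eqref{eq:Ebnd} is exactly to upgrade this to the strong monotonicity that yields a global diffeomorphism. The secondary technical point is the bookkeeping in the completion-of-squares relaxation: one must split the cross and rate terms so that the concave contributions are dominated and the exact observer inequality is recovered from \eqref{eq:convexrel}, tracking the scalar factors on $\rho$ and $\lambda$ introduced at that step.
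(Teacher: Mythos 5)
Your proposal is correct and follows essentially the same route as the paper: the metric $M(x)=\Phi(x)'Q^{-1}\Phi(x)$, the identity $\dot M+A'M+MA=F'Q^{-1}\Phi+\Phi'Q^{-1}F$, and the polarisation-plus-completion-of-squares bound $-(\Phi-F)'Q^{-1}(\Phi-F)\le Q-(\Phi-F)-(\Phi-F)'$ recovering the observer contraction condition from \eqref{eq:convexrel}. The only cosmetic difference is in establishing that $\phi$ is a global diffeomorphism: you integrate \eqref{eq:Ebnd} to get strong monotonicity and invoke a Hadamard-type argument, whereas the paper deduces the same fact from contraction of the auxiliary system $\dot x=-\phi(x)+z$; both rest on the identical hypothesis and are interchangeable.
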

\begin{proof}
We first show that $\phi$ is a diffeomorphism. The condition \eqref{eq:Ebnd} implies the autonomous dynamical system
$
\dot x = -\phi(x)+z
$
is contracting, and therefore has a unique equilibrium $z=\phi(x)$, hence the nonlinear mapping $\phi$ has well-defined inverse. From \eqref{eq:Ebnd} it is also clear that the Jacobian of $\phi (\cdot)$ is non-singular for every $x$, so the inverse of $\phi$ is everywhere differentiable. 

Secondly, we show that \eqref{eq:convexrel} implies contraction of the observer error. Take the metric
\[
\delta_zQ^{-1}\delta_z = \delta_x\Phi(x)'Q^{-1}\Phi(x)\delta_x=:\langle \delta_x, \delta_x\rangle_x.
\]
Then its derivative is
\[
\frac{d}{dt}\langle \delta_x, \delta_x\rangle_x = 2\delta_x\Phi(x)'Q^{-1}F(x)\delta_x
\]
with $F$ defined as above. Now, taking the symmetric part of the matrix the polarisation identity:
\begin{align}
\Phi'Q^{-1}F + \Phi'Q^{-1}F =& \frac{1}{2}(\Phi+F)'Q^{-1}(\Phi+F)\notag\\ &-\frac{1}{2}(\Phi-F)'Q^{-1}(\Phi-F)\notag
\end{align}
where the first term is convex in $e, Q$ and the second term is concave. Now by expanding the relation
\[
\left(I-Q^{-1}(\Phi-F)\right)'Q\left(I-Q^{-1}(\Phi-F)\right)\ge 0
\]
we get
\[
-(\Phi-F)'Q^{-1}(\Phi-F)\le Q-(\Phi-F)-(\Phi-F)'
\]
so the right-hand side is a linear (and hence convex) upper bound for the concave part above. Therefore \eqref{eq:convexrel} implies that
\[
\frac{d}{dt}\langle \delta_x, \delta_x\rangle_x\le \rho(x)\delta_x'C'C\delta_x-2\lambda |\delta_x|^2
\]
which is the observer contraction condition.
\end{proof}
Current research considers extensions of these ideas to adaptive control and estimation with nonlinear parametrizations.

\section{Computational Example}

The Moore-Greitzer model, a simplified model of surge-stall dynamics of a jet engine  \cite{moore1986theory}, has motivated substantial development in nonlinear control design (see, e.g., \cite{moore1986theory},  \cite{krstic1995nonlinear}, and references therein). 

A model of surge-stall dynamics was derived in \cite{moore1986theory} based on a Galerkin projection of the PDE on to a Fourier basis. The following reduced model of the surge dynamics is frequently studied:
\[
\begin{bmatrix}\dot\psi \\ \dot\phi \end{bmatrix}  = \begin{bmatrix} \phi + u \\-\psi -\frac{3}{2}\phi^2 - \frac{1}{2}\phi^3+w\end{bmatrix}.
\]
with $u$ as the input and a sensor on $\psi$ only. Here $\phi$ is a measure of mass flow through the compressor, and $\psi$ is a measure of the pressure rise in the compressor, under a change of coordinates, see \cite[p. 68]{krstic1995nonlinear}. 

We first computed a robust controller for using the method in Section \ref{sec:robust}, with $C=[0, 1]$, $D=0.1$. It follows from the unbounded growth of $A$ that \eqref{eq:WcondHinf} cannot be satisfied across all $x$, however it can be satisfied on any compact set. This can be accomplished using Lagrange multipliers and the sum-of-squares relaxation. We used Yalmip \cite{YALMIP}, \cite{Lofberg09} and Mosek to search for a constant metric $W$ and gain $Y(\phi)$, quadratic in $\phi$, satisfying the following convex constraints:
\begin{align}
v'\begin{bmatrix}
\mathcal W & WC'+Y'D'\\CW+DY & I\end{bmatrix}v - \tau(\phi,v)(r^2-\phi^2)&\ge 0,\notag\\
AW+WA'+BY+Y'B'+\lambda W &\le 0,\notag \\
\tau(\phi,v)\ge 0, \quad\quad
W&\ge 0.\notag
\end{align}
where the Lagrange multiplier $\tau$, quadratic in $\phi$ and $v$, guarantees the gain bound on the set $|\phi|<r$. The second constraint ensures that the closed-loop system is globally exponentially stable. The results are shown in Table \ref{tab1}. 

\begin{table}
\begin{center}
\caption{Best achieved gain bounds for different radii of $\phi$.}\label{tab1}
    \begin{tabular}{|l|c| c| c| c|c|}
	\hline\hline Radius $r$ &1& 5 &10&20&40 \\ 
	\hline $\alpha$ &0.49&1&1.74&3.16&6.1 \\ \hline
    \end{tabular}
\end{center}
\end{table}

We also computed a reduced-order observer using the methods in Section \ref{sec:observer}, assuming that only pressure $\psi$ is measureble, i.e. $y = \phi$.

Again, this design problem could be solved using convex optimization, but for the Moore Greitzer compressor model the computations are actually trivial. Indeed,   
$A_{22}(x) = -3\phi -\frac{3}{2}\phi^2$ and $A_{12} = 1$ so the observer contraction condition \eqref{eq:M22cond} is simply the existence of numbers $M_{22}>0$ and $M_{21}\in\RR$ satisfying
\[
M_{21}-M_{22}(3\phi +3/2\phi^2-\lambda)\le 0.
\]
Since the left-hand-side is concave in $\phi$ with a maximum at $\phi=-1$, the condition is satisfied for, e.g. $M_{22}=1$ and $\lambda = 0.5$, we can take
$
M_{21} = -1.5-\lambda=-2.
$
Note that the observer construction only depends on the ratio of $M_{21}/M_{22}$.

Simulation results can be seen in Figure \ref{fig:MeasNoise}, where a Gaussian white noise with standard deviation 0.2 was added to $y$. It can be seen that despite the noise, the estimate of the unobserved state $\phi$ converges rapidly and is not greatly perturbed by the noise. 

\begin{figure}
\begin{center}
\includegraphics[width=0.9\columnwidth]{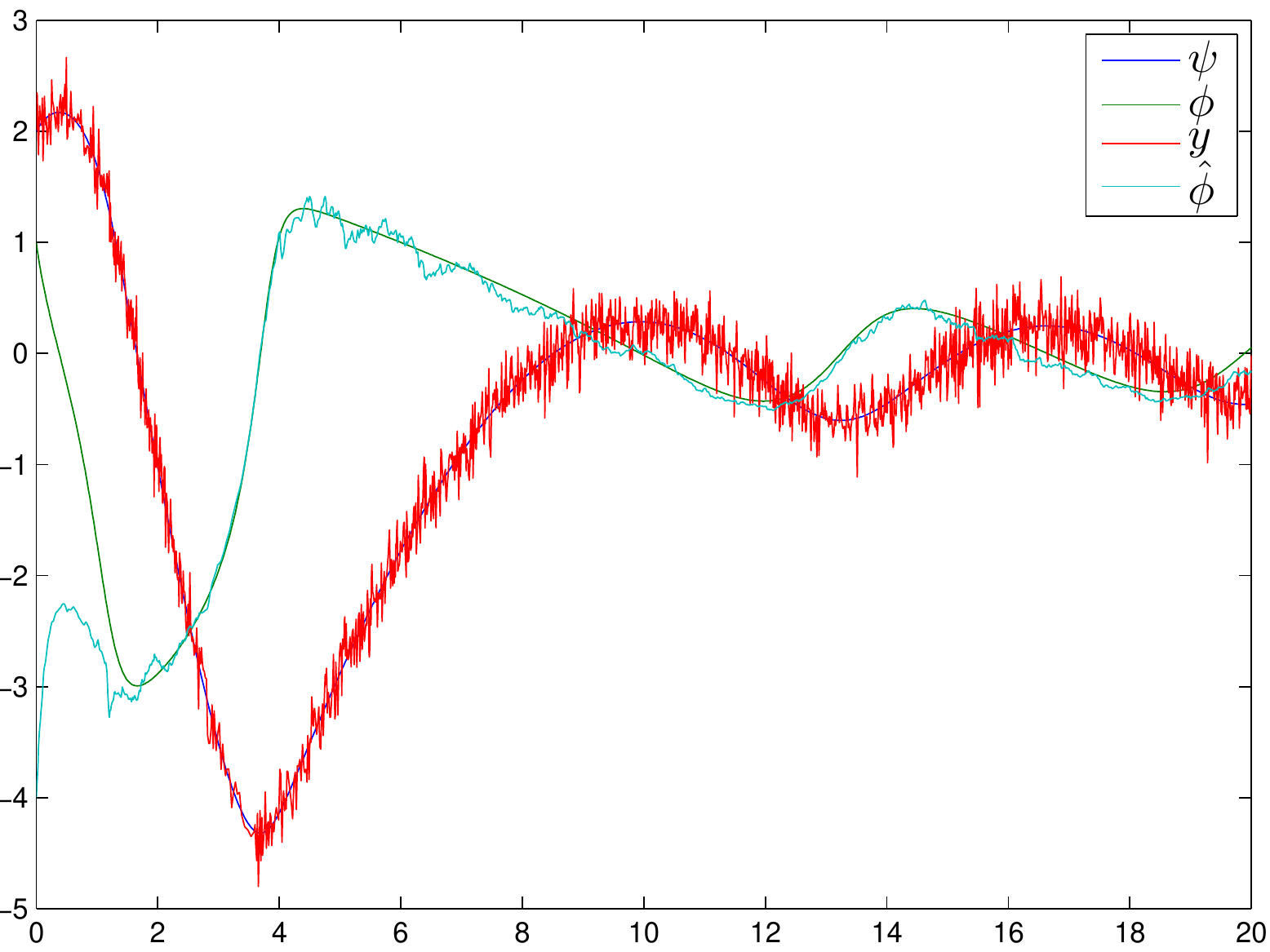}
\caption{Reduced order observer for the Moore-Greitzer system: true states and estimates vs time (s).}
\label{fig:MeasNoise}
\end{center}
\end{figure}

\bibliographystyle{IEEEtran}
\bibliography{elib}

\begin{thebibliography}{10}
\providecommand{\url}[1]{#1}
\csname url@samestyle\endcsname
\providecommand{\newblock}{\relax}
\providecommand{\bibinfo}[2]{#2}
\providecommand{\BIBentrySTDinterwordspacing}{\spaceskip=0pt\relax}
\providecommand{\BIBentryALTinterwordstretchfactor}{4}
\providecommand{\BIBentryALTinterwordspacing}{\spaceskip=\fontdimen2\font plus
\BIBentryALTinterwordstretchfactor\fontdimen3\font minus
  \fontdimen4\font\relax}
\providecommand{\BIBforeignlanguage}[2]{{%
\expandafter\ifx\csname l@#1\endcsname\relax
\typeout{** WARNING: IEEEtran.bst: No hyphenation pattern has been}%
\typeout{** loaded for the language `#1'. Using the pattern for}%
\typeout{** the default language instead.}%
\else
\language=\csname l@#1\endcsname
\fi
#2}}
\providecommand{\BIBdecl}{\relax}
\BIBdecl

\bibitem{sontag1983lyapunov}
E.~D. Sontag, ``A {L}yapunov-like characterization of asymptotic
  controllability,'' \emph{SIAM Journal on Control and Optimization}, vol.~21,
  no.~3, pp. 462--471, 1983.

\bibitem{artstein1983stabilization}
Z.~Artstein, ``Stabilization with relaxed controls,'' \emph{Nonlinear Analysis:
  Theory, Methods \& Applications}, vol.~7, no.~11, pp. 1163--1173, 1983.

\bibitem{rantzer2001dual}
A.~Rantzer, ``A dual to {L}yapunov's stability theorem,'' \emph{Systems \&
  Control Letters}, vol.~42, no.~3, pp. 161--168, 2001.

\bibitem{isidori1995nonlinear}
A.~Isidori, \emph{Nonlinear control systems}.\hskip 1em plus 0.5em minus
  0.4em\relax Springer, 1995.

\bibitem{krstic1995nonlinear}
M.~Krstic, I.~Kanellakopoulos, and P.~Kokotovic, \emph{Nonlinear and adaptive
  control design}.\hskip 1em plus 0.5em minus 0.4em\relax Wiley, 1995, vol.
  222.

\bibitem{schaft1999l2}
A.~{van der Schaft}, \emph{L2-Gain and Passivity in Nonlinear Control}.\hskip
  1em plus 0.5em minus 0.4em\relax Springer-Verlag New York, Inc., 1999.

\bibitem{slotine1991applied}
J.-J.~E. Slotine and W.~Li, \emph{Applied nonlinear control}.\hskip 1em plus
  0.5em minus 0.4em\relax Prentice-Hall, 1991.

\bibitem{boyd1994linear}
S.~Boyd, L.~{el Ghaoui}, E.~Feron, and V.~Balakrishnan, \emph{Linear matrix
  inequalities in system and control theory}.\hskip 1em plus 0.5em minus
  0.4em\relax Society for Industrial and Applied Mathematics (SIAM), 1994.

\bibitem{parrilo2003semidefinite}
P.~A. Parrilo, ``Semidefinite programming relaxations for semialgebraic
  problems,'' \emph{Mathematical Programming}, vol.~96, no.~2, pp. 293--320,
  2003.

\bibitem{prajna2004nonlinear}
S.~Prajna, P.~A. Parrilo, and A.~Rantzer, ``Nonlinear control synthesis by
  convex optimization,'' \emph{Automatic Control, IEEE Transactions on},
  vol.~49, no.~2, pp. 310--314, 2004.

\bibitem{lasserre2008nonlinear}
J.~B. Lasserre, D.~Henrion, C.~Prieur, and E.~Tr{\'e}lat, ``Nonlinear optimal
  control via occupation measures and {LMI}-relaxations,'' \emph{SIAM Journal
  on Control and Optimization}, vol.~47, no.~4, pp. 1643--1666, 2008.

\bibitem{vaidya2010nonlinear}
U.~Vaidya, P.~G. Mehta, and U.~V. Shanbhag, ``Nonlinear stabilization via
  control {L}yapunov measure,'' \emph{Automatic Control, IEEE Transactions on},
  vol.~55, no.~6, pp. 1314--1328, 2010.

\bibitem{tedrake2010lqr}
R.~Tedrake, I.~R. Manchester, M.~Tobenkin, and J.~W. Roberts, ``{LQR}-{T}rees:
  Feedback motion planning via sums-of-squares verification,'' \emph{The
  International Journal of Robotics Research}, vol.~29, no.~8, pp. 1038--1052,
  2010.

\bibitem{diehl2009efficient}
M.~Diehl, H.~J. Ferreau, and N.~Haverbeke, ``Efficient numerical methods for
  nonlinear {MPC} and moving horizon estimation,'' in \emph{Nonlinear Model
  Predictive Control}.\hskip 1em plus 0.5em minus 0.4em\relax Springer, 2009,
  pp. 391--417.

\bibitem{manchester2014control}
I.~R. Manchester and J.-J.~E. Slotine, ``Control contraction metrics and
  universal stabilizability,'' in \emph{Proceedings of the 2014 IFAC World
  Congress}, Capetown, South Africa, 2014.

\bibitem{Lohmiller98}
W.~Lohmiller and J.-J.~E. Slotine, ``On contraction analysis for non-linear
  systems,'' \emph{Automatica}, vol.~34, no.~6, pp. 683--696, June 1998.

\bibitem{aylward2008stability}
E.~M. Aylward, P.~A. Parrilo, and J.~J.~E. Slotine, ``Stability and robustness
  analysis of nonlinear systems via contraction metrics and {SOS}
  programming,'' \emph{Automatica}, vol.~44, no.~8, pp. 2163--2170, 2008.

\bibitem{lewis1949metric}
D.~Lewis, ``Metric properties of differential equations,'' \emph{American
  Journal of Mathematics}, pp. 294--312, 1949.

\bibitem{manchester2014transverse}
I.~R. Manchester and J.-J.~E. Slotine, ``Transverse contraction criteria for
  existence, stability, and robustness of a limit cycle,'' \emph{Systems \&
  Control Letters}, vol.~62, pp. 32--38, 2014.

\bibitem{boichenko}
V.~A. Boichenko, G.~A. Leonov, and V.~Reitman, \emph{Dimension Theory for
  Ordinary Differential Equations}.\hskip 1em plus 0.5em minus 0.4em\relax
  Teubner Verlag, 2005.

\bibitem{desoer1975feedback}
C.~A. Desoer and M.~Vidyasagar, ``Feedback systems: input-output properties,''
  1975.

\bibitem{angeli2002lyapunov}
D.~Angeli, ``A {L}yapunov approach to incremental stability properties,''
  \emph{IEEE Transactions on Automatic Control}, vol.~47, no.~3, pp. 410--421,
  2002.

\bibitem{demidovich1962dissipativity}
B.~Demidovich, ``Dissipativity of nonlinear system of differential equations,''
  \emph{Ser. Mat. Mekh}, pp. 19--27, 1962.

\bibitem{ruffer2013convergent}
B.~S. R{\"u}ffer, N.~van~de Wouw, and M.~Mueller, ``Convergent systems vs.
  incremental stability,'' \emph{Systems \& Control Letters}, vol.~62, no.~3,
  pp. 277--285, 2013.

\bibitem{pavlov2006uniform}
A.~Pavlov, N.~van~de Wouw, and H.~Nijmeijer, \emph{Uniform output regulation of
  nonlinear systems: A Convergent Dynamics Approach.}\hskip 1em plus 0.5em
  minus 0.4em\relax Birkhauser, 2006.

\bibitem{apkarian1995convex}
P.~Apkarian and P.~Gahinet, ``A convex characterization of gain-scheduled
  ${H}^\infty$ controllers,'' \emph{Automatic Control, IEEE Transactions on},
  vol.~40, no.~5, pp. 853--864, 1995.

\bibitem{docarmo1992riemannian}
M.~P. {Do Carmo}, \emph{Riemannian geometry}.\hskip 1em plus 0.5em minus
  0.4em\relax Springer, 1992.

\bibitem{lu1997robustness}
W.-M. Lu and J.~C. Doyle, ``Robustness analysis and synthesis for nonlinear
  uncertain systems,'' \emph{IEEE Transactions on Automatic Control}, vol.~42,
  no.~12, pp. 1654--1662, 1997.

\bibitem{fromion2003theoretical}
V.~Fromion and G.~Scorletti, ``A theoretical framework for gain scheduling,''
  \emph{International Journal of Robust and Nonlinear Control}, vol.~13,
  no.~10, pp. 951--982, 2003.

\bibitem{lohmiller1997applications}
W.~Lohmiller and J.-J.~E. Slotine, ``Applications of contraction analysis,'' in
  \emph{Decision and Control, 1997., Proceedings of the 36th IEEE Conference
  on}, vol.~2, 1997, pp. 1044--1049.

\bibitem{helton1999extending}
J.~W. Helton and M.~R. James, \emph{Extending $H^\infty$ control to nonlinear
  systems}.\hskip 1em plus 0.5em minus 0.4em\relax SIAM, Philadelphia, 1999.

\bibitem{georgiou1993differential}
T.~T. Georgiou, ``Differential stability and robust control of nonlinear
  systems,'' \emph{Mathematics of Control, Signals and Systems}, vol.~6, no.~4,
  pp. 289--306, 1993.

\bibitem{fromion2001weighted}
V.~Fromion, S.~Monaco, and D.~Normand-Cyrot, ``The weighted incremental norm
  approach: from linear to nonlinear $h_\infty$ control,'' \emph{Automatica},
  vol.~37, no.~10, pp. 1585--1592, 2001.

\bibitem{schweickhardt2009system}
T.~Schweickhardt and F.~Allgower, ``On system gains, nonlinearity measures, and
  linear models for nonlinear systems,'' \emph{Automatic Control, IEEE
  Transactions on}, vol.~54, no.~1, pp. 62--78, 2009.

\bibitem{hespanha2009linear}
J.~P. Hespanha, \emph{Linear systems theory}.\hskip 1em plus 0.5em minus
  0.4em\relax Princeton university press, 2009.

\bibitem{tsinias1989observer}
J.~Tsinias, ``Observer design for nonlinear systems,'' \emph{Systems \& Control
  Letters}, vol.~13, no.~2, pp. 135--142, 1989.

\bibitem{tsinias1990further}
------, ``Further results on the observer design problem,'' \emph{Systems \&
  Control Letters}, vol.~14, no.~5, pp. 411--418, 1990.

\bibitem{sanfelice2012convergence}
R.~G. Sanfelice and L.~Praly, ``Convergence of nonlinear observers on with a
  {R}iemannian metric (part i),'' \emph{Automatic Control, IEEE Transactions
  on}, vol.~57, no.~7, pp. 1709--1722, 2012.

\bibitem{karagiannis2008invariant}
D.~Karagiannis, D.~Carnevale, and A.~Astolfi, ``Invariant manifold based
  reduced-order observer design for nonlinear systems,'' \emph{Automatic
  Control, IEEE Transactions on}, vol.~53, no.~11, pp. 2602--2614, 2008.

\bibitem{tobenkin2010convex}
M.~M. Tobenkin, I.~R. Manchester, J.~Wang, A.~Megretski, and R.~Tedrake,
  ``Convex optimization in identification of stable non-linear state space
  models,'' in \emph{49th IEEE Conference on Decision and Control (CDC)}.\hskip
  1em plus 0.5em minus 0.4em\relax IEEE, 2010.

\bibitem{moore1986theory}
F.~Moore and E.~Greitzer, ``A theory of post-stall transients in axial
  compression systems. i: Development of equations,'' \emph{Journal of
  engineering for gas turbines and power}, vol. 108, no.~1, pp. 68--76, 1986.

\bibitem{YALMIP}
J.~L\"ofberg, ``Yalmip : A toolbox for modeling and optimization in {MATLAB},''
  in \emph{Proceedings of the CACSD Conference}, Taipei, Taiwan, 2004.

\bibitem{Lofberg09}
------, ``Pre- and post-processing sum-of-squares programs in practice,''
  \emph{IEEE Transactions on Automatic Control}, vol.~54, no.~5, pp.
  1007--1011, 2009.

\end{thebibliography}

\end{document}